\numberwithin{equation}{section}
\newtheoremstyle{thmlemcorr}{10pt}{10pt}{\itshape}{}{\bfseries}{.}{10pt}{{\thmname{#1}\thmnumber{ #2}\thmnote{ (#3)}}}
\newtheoremstyle{thmlemcorr*}{10pt}{10pt}{\itshape}{}{\bfseries}{.}\newline{{\thmname{#1}\thmnumber{ #2}\thmnote{ (#3)}}}
\newtheoremstyle{defi}{10pt}{10pt}{\itshape}{}{\bfseries}{.}{10pt}{{\thmname{#1}\thmnumber{ #2}\thmnote{ (#3)}}}
\newtheoremstyle{remexample}{10pt}{10pt}{}{}{\bfseries}{.}{10pt}{{\thmname{#1}\thmnumber{ #2}\thmnote{ (#3)}}}
\newtheoremstyle{ass}{10pt}{10pt}{}{}{\bfseries}{.}{10pt}{{\thmname{#1}\thmnumber{ A#2}\thmnote{ (#3)}}}
\theoremstyle{thmlemcorr}
\newtheorem{theorem}{Theorem}
\numberwithin{theorem}{section}
\newtheorem{lemma}[theorem]{Lemma}
\theoremstyle{thmlemcorr*}
\newtheorem{theorem*}{Theorem}
\newtheorem{lemma*}[theorem]{Lemma}
\newtheorem{corollary*}[theorem]{Corollary}
\newtheorem{proposition*}[theorem]{Proposition}
\newtheorem{problem*}[theorem]{Problem}
\newtheorem{conjecture*}[theorem]{Conjecture}
\theoremstyle{defi}
\newtheorem{definition}[theorem]{Definition}
\theoremstyle{remexample}
\newtheorem{remark}[theorem]{Remark}
\theoremstyle{ass}
\newcommand{\Lrm}{\mathrm{L}}
\newcommand{\Hcal}{\mathcal{H}}
\newcommand{\Lcal}{\mathcal{L}}
\newcommand{\Mcal}{\mathcal{M}}
\newcommand{\Pcal}{\mathcal{P}}
\newcommand{\Nbf}{\mathbf{N}}
\DeclareMathOperator{\dom}{Dom}
\DeclareMathOperator{\im}{Im}
\DeclareMathOperator{\spn}{span}
\newcommand{\setn}[2]{\{\, #1 \ \ \textup{\textbf{:}}\ \ #2 \,\}}
\newcommand{\setb}[2]{\bigl\{\, #1 \ \ \textup{\textbf{:}}\ \ #2 \,\bigr\}}
\newcommand{\abs}[1]{|#1|}
\newcommand{\dd}{\;\mathrm{d}}
\newcommand{\N}{\mathbb{N}}
\newcommand{\R}{\mathbb{R}}
\DeclareMathOperator{\Tan}{Tan}
\newcommand{\term}[1]{\emph{#1}}
\newcommand{\proofstep}[1]{\textit{#1}}
\def\XXint#1#2#3{{\setbox0=\hbox{$#1{#2#3}{\int}$} 
\vcenter{\hbox{$#2#3$}}\kern-.5\wd0}}
\newcommand{\restrict}{\begin{picture}(10,8)\put(2,0){\line(0,1){7}}\put(1.8,0){\line(1,0){7}}\end{picture}}
\renewcommand{\phi}{\varphi}
\begin{document}

\title[On a conjecture of Cheeger]{On a conjecture of Cheeger}

\author[G.~De~Philippis]{Guido De Philippis}
\address{\textit{G.~De Philippis:} SISSA, Via Bonomea 265, 34136 Trieste, Italy.}
\email{guido.dephilippis@sissa.it}

\author[A.~Marchese]{Andrea Marchese}
\address{\textit{A.~Marchese:} Universit\"{a}t Z\"{u}rich, Winterthurerstrasse 190, CH-8057 Z\"{u}rich, Switzerland.}
\email{andrea.marchese@math.uzh.ch}

\author[F.~Rindler]{Filip Rindler}
\address{\textit{F.~Rindler:} Mathematics Institute, University of Warwick, Coventry CV4 7AL, UK.}
\email{F.Rindler@warwick.ac.uk}

\maketitle

\begin{abstract}
This note details how a recent structure theorem for normal $1$-currents proved by the first and third author  allows to prove a conjecture of Cheeger concerning the structure of Lipschitz differentiability spaces.  More precisely, we show that the  push-forward of the measure from a Lipschitz differentiability space under a chart is absolutely continuous with respect to Lebesgue measure.

\vspace{4pt}

\noindent\textsc{Keywords:} Lipschitz differentiability space, Cheeger's conjecture, Alberti representation, metric measure space.
\vspace{4pt}

\noindent\textsc{Date:} \today{}. 
\end{abstract}

\section{Introduction}
In~\cite{Cheeger99}  Cheeger  proved that in every doubling  metric measure space $(X,\rho,\mu)$  satisfying a Poincar\'{e} inequality, Lipschitz functions are differentiable $\mu$-almost everywhere. More precisely, he showed  the existence of a family $\{(U_i,\phi_i)\}_{i \in \N}$ of  Borel charts (that is, \(U_i\subset X\) is a Borel set, $X=\bigcup_iU_i$ up to a $\mu$-negligible set, and  $\phi_i \colon X \to \R^{d(i)}$ is Lipschitz) such that for every Lipschitz map $f \colon X \to \R$ at $\mu$-almost every \(x_0\in U_i\) there exists a unique (co-)vector \(df(x_0)\in \R^{d(i)}\) with
\[
  \limsup_{x\to x_0} \frac{\abs{f(x) - f(x_0) - df(x_0) \cdot (\phi(x) - \phi(x_0))}}{\rho(x,x_0)} = 0.
\]
This fact was later axiomatized by Keith~\cite{Keith04}, leading to the notion of \term{Lipschitz differentiability space}, see Section~\ref{sc:setup} below. 

Cheeger also conjectured that the push-forward of the reference measure \(\mu\) under every chart \(\varphi_i\) has to be absolutely continuous with respect to the Lebesgue measure, that is,
\begin{equation*} \label{eq:Cheeger}
  (\phi_i)_\# (\mu \restrict U_i) \ll \Lcal^{d(i)}\,,
\end{equation*}
see~\cite[Conjecture 4.63]{Cheeger99}.
Some consequences of this fact concerning existence of bi-Lipschitz embeddings of \(X\) into some \(\R^N\) are detailed in~\cite[Section 14]{Cheeger99}, also see~\cite{CheegerKleiner06,CheegerKleiner09}

Let us assume that  \((X,\rho,\mu) = (\R^d,\rho_{\mathcal E}, \nu)\) with \( \rho_{\mathcal E}\) the Euclidean distance and \(\nu\) a positive Radon measure, is a Lipschitz differentiability space when equipped with the (single) identity chart (note that it follows a-posteriori from the validity of Cheeger's conjecture that no mapping into a higher-dimensional space can be a chart in a Lipschitz differentiability structure of $\R^d$). In this case the validity of Cheeger's conjecture reduces  to the validity of the  (weak) converse of Rademacher's theorem, which states that a  positive Radon measure \(\nu\) on \(\R^d\) with the property that all Lipschitz functions are differentiable \(\nu\)-almost everywhere must be absolutely continuous with respect to \(\Lcal^d\). Actually, it is well known to experts that this converse of Rademacher's theorem  implies  Cheeger's conjecture in any metric space, see for instance~\cite[Section 2.4]{Keith04},~\cite[Remark~6.11]{Bate15}, and~\cite{Gong11}.

The (strong) converse of Rademacher's theorem has been known to be true  in \(\R\)  since  the work of  Zahorski~\cite{Zahorski46}, where he characterized the sets $E\subset\R$ that are sets of non-differentiability points of some Lipschitz function. In particular, he proved that for every Lebesgue negligible set \(E\subset \R\) there exists a Lipschitz function which is nowhere differentiable on \(E\). 

The same  result for maps  \(f \colon \R^d\to \R^d\) has been proved by Alberti, Cs\"ornyei \& Preiss  for  \(d=2\) as a consequence of a deep structural result for negligible sets in the plane~\cite{AlbertiCsornyeiPreiss05,AlbertiCsornyeiPreiss10}. In 2011, Cs\"ornyei \& Jones~\cite{Jones11talk}
announced  the extension of the above result to every Euclidean space. For Lipschitz maps $f \colon \R^d \to \R^m$ with $m < d$ the situation is fundamentally different and there exists a null set such that every Lipschitz function is differentiable at at least one point from that set, see~\cite{Preiss90,PreissSpeight14}.
We finally remark that the weak converse of Rademacher's theorem in \(\R^2\) can also be obtained by combining the results of~\cite{Alberti93} and~\cite{AlbertiMarchese16}, see~\cite[Remark~6.2~(iv)]{AlbertiMarchese16}.

Recently, a result concerning the singular structure of measures satisfying a differential constraint was proved in~\cite{DePhilippisRindler16}. When combined with the main result of~\cite{AlbertiMarchese16}, this proves  the weak converse of Rademacher's theorem in any dimension, see~\cite[Theorem~1.14]{DePhilippisRindler16}.

In this note we detail how the results in~\cite{AlbertiMarchese16,DePhilippisRindler16} in conjunction with Bate's result on the existence of a sufficient number of independent Alberti representations in a Lipschitz differentiability space~\cite{Bate15} imply Cheeger's conjecture; see Section~\ref{sc:setup} for the relevant definitions.

\begin{theorem} \label{thm:main}
Let $(X,\rho,\mu)$ be a Lipschitz differentiability space and let \((U,\varphi)\) be a \(d\)-dimensional chart. Then, 
\(
\varphi_\# (\mu\restrict U)\ll \Lcal^d.
\)
\end{theorem}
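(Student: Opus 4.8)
The plan is to transport the problem from the abstract space $(X,\rho,\mu)$ to $\R^d$ by pushing the measure forward under the chart map, and then to invoke the weak converse of Rademacher's theorem in its decomposability-bundle form. Write $\nu := \varphi_\#(\mu\restrict U)$, a positive Radon measure on $\R^d$; the goal is precisely $\nu \ll \Lcal^d$. Since absolute continuity is countably additive, I may freely replace $U$ by a countable Borel decomposition whenever the cited results are only available on suitable subsets.

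First I would invoke Bate's theorem \cite{Bate15}: on a $d$-dimensional chart $(U,\varphi)$ of a Lipschitz differentiability space, the measure $\mu\restrict U$ admits $d$ \emph{independent} Alberti representations directed by $\varphi$. Concretely, $\mu\restrict U$ decomposes as a superposition of one-dimensional measures carried by fragments (bi-Lipschitz curves) $\gamma$ in $X$, organized into $d$ families whose $\varphi$-velocities $(\varphi\circ\gamma)'$ lie, at almost every point and for almost every fragment, in $d$ cones that span $\R^d$ in a quantitative sense.

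Second, I would push these representations forward under $\varphi$. Each fragment $\gamma$ maps to a Lipschitz curve $\varphi\circ\gamma$ in $\R^d$, and the cone condition on $(\varphi\circ\gamma)'$ is exactly the defining property of the resulting Euclidean Alberti representations of $\nu$; since the spanning of the cones is untouched by the pushforward, $\nu$ inherits $d$ independent Alberti representations on $\R^d$. Next I would appeal to the theory of the decomposability bundle of \cite{AlbertiMarchese16}: the directions carried by any Alberti representation of $\nu$ lie, $\nu$-almost everywhere, inside the decomposability bundle $V(\nu,\cdot)$. Having $d$ independent Alberti representations therefore forces $V(\nu,x)=\R^d$ for $\nu$-a.e.\ $x$, which by the same theory means that every Lipschitz function $\R^d\to\R$ is differentiable $\nu$-almost everywhere. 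Finally, the weak converse of Rademacher's theorem \cite[Theorem~1.14]{DePhilippisRindler16} asserts that such a measure is absolutely continuous with respect to $\Lcal^d$, giving $\nu\ll\Lcal^d$ as claimed.

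The main obstacle is the compatibility step: making precise that Alberti representations, and crucially the independence of their directions, survive the pushforward by the Lipschitz and possibly non-injective chart $\varphi$. Non-injectivity means distinct fragments in $X$ may collapse onto overlapping curves in $\R^d$, so one must verify that the directional cone constraints persist under the pushforward and that the quantitative spanning property is not degraded. Once this bookkeeping is in place, the three cited results — Bate's existence of independent Alberti representations, the decomposability bundle of Alberti–Marchese, and the converse of Rademacher of De Philippis–Rindler — combine directly to yield the conclusion.
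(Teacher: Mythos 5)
Your proposal follows the paper's skeleton for its first two steps --- Bate's theorem giving $d$ $\varphi$-independent Alberti representations on a countable Borel decomposition of $U$, followed by a push-forward of these representations to $\nu=\varphi_\#(\mu\restrict U_k)$ --- but diverges in the Euclidean endgame. You pass to the decomposability bundle of \cite{AlbertiMarchese16}, conclude $V(\nu,x)=\R^d$ for $\nu$-a.e.\ $x$, deduce that every Lipschitz function is differentiable $\nu$-a.e., and finish with the weak converse of Rademacher's theorem \cite[Theorem~1.14]{DePhilippisRindler16}. The paper instead stays at the level of currents: from each representation it constructs a normal $1$-current $T_i$ with $\nu\ll\|T_i\|$ and $\vec T_i\in C_i\setminus\{0\}$ (Lemma~\ref{lem:1curr_Arepr}, essentially \cite[Corollary~6.5]{AlbertiMarchese16}), and then applies \cite[Corollary~1.12]{DePhilippisRindler16}, restated as Theorem~\ref{thm:1curr}. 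The two routes are equivalent in substance, since Theorem~1.14 of that reference is itself obtained by combining Corollary~1.12 with the Alberti--Marchese theory; yours is the \enquote{converse of Rademacher implies Cheeger} reduction that the introduction describes as well known to experts, while the paper's version is more self-contained, its only black box being the structure theorem for normal currents.

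The genuine weakness is the step you dismiss as bookkeeping. The push-forward of the Alberti representations is the paper's Lemma~\ref{lem:Arepr_push-forward}, and its difficulty is not where you place it: the cone condition transfers for free, because $\varphi$-directedness is by definition a condition on $(\varphi\circ\gamma)'$, and linear independence is a property of the cones alone. What actually requires proof is that the pushed-forward object is an Alberti representation at all. Since $\varphi$ need not be injective, the fragment measures must be regrouped according to their image curves; the paper does this by disintegrating $\pi$ over $\bar\pi:=\Phi_\#\pi$, where $\Phi(\gamma):=\varphi\circ\gamma$, obtaining fiber measures $\eta_{\bar\gamma}$ concentrated on $\Phi^{-1}(\bar\gamma)$ and setting $\nu_{\bar\gamma}:=\int_{\Phi^{-1}(\bar\gamma)}\varphi_\#(\mu_\gamma)\dd\eta_{\bar\gamma}(\gamma)$. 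One must then verify $\nu_{\bar\gamma}\ll\Hcal^1\restrict\im\bar\gamma$, and this is where the cone condition enters in an essential, not cosmetic, way: $\bar\gamma'\neq 0$ a.e., so if $\Hcal^1(E\cap\im\bar\gamma)=0$ the area formula gives $\Lcal^1(\bar\gamma^{-1}(E))=0$, hence $\Hcal^1(\gamma(\bar\gamma^{-1}(E)))=0$, hence $\mu_\gamma(\varphi^{-1}(E))=0$ and so $\nu_{\bar\gamma}(E)=0$. Without this lemma neither your decomposability-bundle step nor the paper's Theorem~\ref{thm:1curr} can be invoked, so it should be proved rather than deferred.
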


Note that by the same arguments of this paper Cheeger's conjecture would also follow from the results announced in~\cite{AlbertiCsornyeiPreiss05} and~\cite{Jones11talk}. 

After we finished writing  this note we learned that  similar  results have been proved by Kell and Mondino~\cite{KellMondino16} and by Gigli and Pasqualetto~\cite{GigliPasqualetto16}.

\subsection*{Acknowledgments} 
The authors would like to thank Nicola Gigli and Andrea Schioppa  for suggesting to write this note and for several useful discussions. G.~D.~P. is supported by the MIUR SIR-grant ``Geometric Variational Problems'' (RBSI14RVEZ). A.~M. is supported by the ERC-grant ``Regularity of area-minimizing currents'' (306247).
F.~R.\ gratefully acknowledges the support from an EPSRC Research Fellowship on ``Singularities in Nonlinear PDEs'' (EP/L018934/1).

\section{Setup} \label{sc:setup}

\subsection{Lipschitz differentiability spaces}
In the sequel, the triple  $(X,\rho,\mu)$ will always denote a  \term{metric measure space}, that is, $(X,\rho)$ is a separable, complete metric space and $\mu \in \Mcal_+(X)$ is a positive  Radon measure on $X$. 

We call a pair \((U,\phi)\) such that  $U \subset X$ is a Borel set and  $\phi \colon X \to \R^d$ is Lipschitz, a \term{$d$-dimensional chart}, or simply a \term{$d$-chart}. A function  $f \colon X \to \R$ is said to be \term{differentiable with respect to a $d$-chart $(U,\phi)$}  at $x_0 \in U$ if there exists a unique (co-)vector $df(x_0) \in \R^d$ such that
\[
  \limsup_{x\to x_0} \frac{\abs{f(x) - f(x_0) - d f(x_0) \cdot (\phi(x) - \phi(x_0))}}{\rho(x,x_0)} = 0.
\]

We call a metric measure space $(X,\rho,\mu)$ a \term{Lipschitz differentiability space} (also called a metric measure space that admits a \term{measurable differentiable structure}) if there exists a countable family of \(d(i)\)-charts $(U_i,\phi_i)$ ($i \in \N$) such that $X=\bigcup_iU_i$ and any Lipschitz map $f \colon X \to \R$ is differentiable with respect to every $(U_i,\phi_i)$ at $\mu$-almost every point $x_0 \in U_i$.

\subsection{Alberti representations}\label{sc:s1}
We denote by  $\Gamma(X)$ the set of \term{curves} in $X$, that is, the set of all Lipschitz maps $\gamma \colon \dom \gamma \to X$, for which the domain $\dom \gamma \subset \R$ is non-empty and compact. Note that we are not requiring \(\dom \gamma\) to be  an interval and thus the set \(\Gamma(X)\) is sometimes also called the set of \term{curve fragments} on \(X\).   We equip $\Gamma(X)$ with the Hausdorff metric ${\rm{dist}}_{\Hcal}$ on graphs and we consider it as a subspace of the Polish space
\begin{equation}\label{eq:K}
\mathcal K=\setb{K\subset \R\times X}{ \text{$K$ compact}},
\end{equation}
endowed with the Hausdorff metric. Moreover, by arguing as in~\cite[Lemma 2.20]{Schioppa13}, it is easy to see that \(\Gamma(X)\) is an \(F_\sigma\)-subset of \(\mathcal K\), i.e.\ a countable union of closed sets.

The decomposition of a measure into a family of \(1\)-dimensional Hausdorff measures supported on curves leads to the notion of Alberti representation. First introduced in~\cite{Alberti93} for the study of the rank-one property of BV-derivatives, this decomposition has turned out to be a key tool in the study of differentiability properties of Lipschitz functions, see for instance~\cite{AlbertiCsornyeiPreiss05,AlbertiCsornyeiPreiss10, AlbertiMarchese16,Bate15}.

\begin{definition}\label{def:Albertirepresentation}
Let $(X,\rho,\mu)$ be a metric measure space. An \term{Alberti representation} of $\mu$ on a $\mu$-measurable set $A \subset X$
is a parametrized family $(\mu_\gamma)_{\gamma \in \Gamma(X)}$ of positive Borel measures $\mu_\gamma \in \Mcal_+(X)$ with
\[
  \mu_\gamma \ll \Hcal^1 \restrict \im \gamma,
\]
together with a Borel probability measure $\pi \in \Pcal(\Gamma(X))$ such that
\begin{equation}\label{eq:dec}
  \mu(B) = \int \mu_\gamma(B) \dd \pi(\gamma)  \qquad
  \text{for all Borel sets $B \subset A$.}
\end{equation}
Here, the measurability of the integrand is part of the requirement of being an Alberti representation
\end{definition}

\begin{remark}\label{rmk:bilip}
Note that this definition is slightly different from the one in~\cite[Definition 2.2]{Bate15} since there the set  \(\Gamma(X)\) consist of \term{bi-Lipschitz} curves. Clearly, the existence of a representation in the sense of~\cite{Bate15} implies the existence of a representation in our sense and this will suffice for our purposes. Let us, however, point out that the converse holds true as well. Indeed, the part of \(\gamma\) that contributes to the integral in~\eqref{eq:dec} can be decomposed into countably many  bi-Lipschitz pieces, see~\cite[Remark~2.17]{Schioppa13}. 
\end{remark}

We will further need the notion of \emph{independent} Alberti-representations of a measure. Let  $C\subset\R^d$ be a closed, convex, one-sided cone, i.e.\ a set of the form
\[
  C:=\setb{v\in\R^d}{v\cdot w\geq(1-\theta)\|v\|}
\]
for some $w\in\mathbb{S}^{d-1}$ and $\theta \in (0,1)$. With a Lipschitz map $\varphi \colon X \to \R^d$, we say that an Alberti representation $\int \nu_{\gamma} \dd \pi(\gamma)$ has \term{$\phi$-directions in $C$} if
\[
  (\phi \circ \gamma)'(t) \in C \setminus \{0\}  \qquad
  \text{for $\pi$-a.e.\ curve $\gamma$ and $\Hcal^1$-a.e.\ $t \in \dom \gamma$.}
\]
A number of $m$ Alberti representations of $\mu$ are \term{$\phi$-independent} if there are linearly independent cones $C_1,\ldots,C_m$ such that the $i$'th Alberti representation  has $\phi$-directions in $C_i$. Here, linear independence of the cones $C_1,\ldots,C_m$ means that any collection of vectors $v_i \in C_i \setminus \{0\}$ is linearly independent. In the case \(X=\R^d\) we will always consider \(\varphi={\rm Id}\).

One of the main results of~\cite{Bate15} asserts that a Lipschitz differentiability space necessarily admits many independent Alberti representations, also cf.~\cite[Theorem~1.1]{AlbertiMarchese16}. Recall that according to Remark~\ref{rmk:bilip} any representation in the sense of~\cite{Bate15} is also a representation in the sense of Definition~\ref{def:Albertirepresentation}.

\begin{theorem} \label{thm:Bate}
Let $(X,\rho,\mu)$ be a Lipschitz differentiability space with a $d$-chart $(U,\phi)$. Then, there exists a countable decomposition
\[
  U = \bigcup_{k \in \N} U_k,  \qquad \text{$U_k \subset U$ Borel sets,}
\]
such that every $\mu \restrict U_k$ has $d$ $\phi$-independent Alberti representations.
\end{theorem}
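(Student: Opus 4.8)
The plan is to establish the difficult implication that the differentiability of every Lipschitz function with respect to the chart forces $\mu\restrict U$ to carry, after decomposition, the maximal number $d$ of independent Alberti representations. First I would pass to a pointwise formulation: for a $\mu$-measurable $A\subset U$ let $m(A)$ be the largest number of $\phi$-independent Alberti representations carried by $\mu\restrict A$. Since $\Gamma(X)$ is an $F_\sigma$ subset of the Polish space $\Kcal$ of~\eqref{eq:K}, and ``being an Alberti representation with $\phi$-directions in a fixed rational cone'' is a Borel condition amenable to measurable selection, I would produce a countable Borel partition $U=\bigcup_k U_k$ on each piece of which $m(U_k)$ is constant and attained. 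The upper bound $m(U_k)\le d$ is pure linear algebra: choosing $v_i\in C_i\setminus\{0\}$ from linearly independent cones $C_1,\dots,C_{m}\subset\R^d$ produces $m$ linearly independent vectors, so $m\le d$. The whole theorem therefore reduces to showing $m(U_k)=d$ on every piece of positive measure.

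The content lies in the reverse bound $m(U_k)\ge d$, which I would prove by contradiction. Suppose $A:=U_k$ has $\mu(A)>0$ and $m:=m(A)<d$. Maximality of $m$ supplies a unit vector $w\in\Sbb^{d-1}$ for which $\mu\restrict A$ admits \emph{no} Alberti representation with $\phi$-directions in any cone about $w$ --- otherwise such a representation would be independent from the given $m$ and raise the count. The decisive step is then to convert this absence of a representation in the direction $w$ into a failure of differentiability. Concretely, I would show that $\mu\restrict A$ is concentrated on a set that is ``null in the direction $w$'' in the sense governing the construction of non-differentiable Lipschitz functions, and use this to build a Lipschitz $g\colon X\to\R$ together with a subset $A'\subset A$ of positive $\mu$-measure on which $g$ is not differentiable with respect to $(U,\phi)$: the difference quotient of $g(x)-g(x_0)$ against $w\cdot(\phi(x)-\phi(x_0))$ would oscillate at arbitrarily small scales near each $x_0\in A'$. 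This contradicts the Lipschitz differentiability space hypothesis and forces $m=d$.

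I expect this construction to be the main obstacle, as it is the technical core of the result. In $\R^d$ the required passage from ``no Alberti representation in a cone'' to a genuinely non-differentiable function is supplied by the Alberti--Cs\"ornyei--Preiss structure theory of null sets; in the present metric setting one must instead work intrinsically on $X$, manufacturing the relevant curve fragments and the oscillating function directly and controlling everything through $\phi$. A variant that avoids the explicit contradiction is a ``going up'' scheme: show that $m<d$ permits, after a further decomposition of $A$, the construction of an $(m+1)$-st Alberti representation with $\phi$-directions in a cone around the missing direction $w$, obtained by disintegrating $\mu\restrict A$ along fragments on which $x\mapsto w\cdot\phi(x)$ increases at a definite rate; iterating at most $d$ times then terminates with the desired $d$ independent representations. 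Both routes hinge on the same quantitative input --- producing a positive-$\mu$-measure family of curve fragments travelling in the direction $w$ --- which is precisely the place where the differentiability hypothesis has to be exploited.
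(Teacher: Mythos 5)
Your outline is not, and cannot be judged against, ``the paper's own proof'': the paper does not prove this statement at all --- it is quoted from Bate, with the one-line pointer to \cite[Theorem~6.6]{Bate15}. Measured against Bate's actual argument, your sketch does reproduce the correct coarse architecture (the linear-algebra bound $m\le d$, the reduction to a maximal-count contradiction, and especially the ``going up'' refinement, which is close in spirit to how \cite{Bate15} proceeds). But as a proof it has a genuine gap, and you have located it yourself: the passage from ``$\mu\restrict A$ admits no Alberti representation with $\phi$-directions in a cone about $w$'' to ``there is a Lipschitz $g\colon X\to\R$ non-differentiable with respect to $(U,\phi)$ on a positive-measure subset of $A$'' is precisely the technical core of Bate's theorem, occupying the bulk of his paper. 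It requires first a decomposition result saying that for each cone $C$ the set $A$ splits into a part carrying a representation with directions in $C$ and a part that is $C$-null (i.e.\ meets every fragment with $\phi$-directions in $C$ in an $\Hcal^1$-null set), and then a genuinely delicate construction, intrinsic to the metric space, of Lipschitz functions built from distance-type functions at infinitely many scales whose difference quotients against $w\cdot(\phi(x)-\phi(x_0))$ oscillate at every point of a positive-measure set. Asserting that this ``would'' follow, with a gesture at Alberti--Cs\"ornyei--Preiss (which lives in $\R^d$ and does not apply on $X$), leaves the theorem unproved.

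Two smaller points also need repair. First, maximality of $m$ does not directly furnish a single direction $w$ such that \emph{no} cone about $w$ carries a representation independent of the given ones: linear independence is a joint condition on all $m+1$ cones, and if the existing cones $C_1,\dots,C_m$ are wide, the union of subspaces $\spn\{v_1,\dots,v_m\}$ over selections $v_i\in C_i\setminus\{0\}$ can leave no admissible $w$. One must first refine, after a further countable Borel decomposition, the existing representations so that their cones have arbitrarily small aperture (this refinement is itself a lemma in \cite{Bate15}) before the maximality argument produces a missing direction. Second, your first paragraph quietly assumes that the supremum defining $m(A)$ is attained and stable under countable Borel partition; this follows from an exhaustion argument and from gluing representations across a countable partition, which is routine but should be said. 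None of this is fatal to the plan --- it is essentially Bate's plan --- but the proposal as written is a roadmap to \cite[Theorem~6.6]{Bate15}, not a proof of it.
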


A proof of this theorem can be found in~\cite[Theorem~6.6]{Bate15}.

\subsection{One-dimensional currents}
In order to use the results of~\cite{DePhilippisRindler16} we need a link between Alberti representation and \(1\)-dimensional currents. Recall that  a  \term{$1$-dimensional current} $T$ in $\R^d$ is a continuous linear functional on the 
space of smooth and 
compactly supported differential $1$-forms on $\R^d$. 
The \term{boundary} of $T$,
$\partial T$ is the distribution ($0$-current) defined  via $\langle\partial T, f \rangle := \langle T, df\rangle $
for every smooth and compactly supported function $f \colon \R^d \to \R$.
The \term{mass} of $T$, denoted by 
$\mathbf{M}(T)$, is the supremum of $\langle T, \omega\rangle$ over
all $1$-forms $\omega$ such that $|\omega|\le 1$
everywhere. In particular, finite-mass currents can be naturally identified with \(\R^d\)-valued Radon measures. A current $T$ is called \term{normal} if both $T$ 
and $\partial T$ have finite mass; we denote the set of normal $1$-currents by $\mathbf{N}_1(\R^d)$.

By the Radon--Nikod\'{y}m theorem, a $1$-dimensional current $T$ with finite mass can be written in the form
$T=\vec{T} \|T\|$ where $\|T\|$ is a finite positive measure
and $\vec{T}$ is a vector field in $\Lrm^1(\R^d,\|T\|)$ with $|\vec{T}(x)|=1$ for $\|T\|$-almost every $x \in \R^d$.
In particular, the action of $T$ on a smooth and compactly supported $1$-form
$\omega$ is given by
\[
\langle T, \omega\rangle 
= \int_{\R^d} \langle\omega(x), \vec{T}(x)\rangle \dd\|T\|(x)
\; .
\]

An \term{integer-multiplicity rectifiable $1$-current} (in the following called simply rectifiable $1$-current) $T= \llbracket E,\tau,m \rrbracket$ is a $1$-current which acts on $1$-forms $\omega$ as
\[
\langle T, \omega\rangle 
= \int_{E} \langle\omega(x), \tau(x)\rangle \, m(x) \dd \Hcal^1(x)
\; ,
\]
where $E$ is a $1$-rectifiable set, $\tau(x)$ is a unit vector spanning the approximate tangent space $\Tan(E,x)$ and $m$ is an integer-valued function such that $\int_E m \dd \Hcal^1<\infty$. More information on currents can be found in~\cite{Federer69book}.

The relation between Alberti representations and normal $1$-currents is partially encoded in the following decomposition theorem, due to Smirnov~\cite{Smirnov93}.

\begin{theorem}
\label{s-decompcurr}
Let $T=\vec{T} \|T\| \in \mathbf{N}_1(\R^d)$ be a normal $1$-current
with $|\vec{T}(x)|=1$ for $\|T\|$-almost every~$x$.
Then, there exists a family of rectifiable $1$-currents
\[
  T_\gamma= \llbracket E_\gamma,\tau_\gamma,1 \rrbracket,  \qquad \gamma\in\Gamma,
\]
where $\Gamma$ is a measure space endowed with a finite positive Borel measure $\pi \in \Mcal_+(\Gamma)$, 
such that the following assertions hold:

\begin{enumerate}[(i)]
\item
$T$ can be decomposed as
\[
T=\int_\Gamma T_\gamma\dd \pi (\gamma)
\]
and
\[
\mathbf{M}(T) 
= \int_\Gamma \mathbf{M}(T_\gamma) \dd \pi(\gamma)
= \int_\Gamma \Hcal^1(E_\gamma) \dd \pi(\gamma)
  \; ; 
\]
\item
$\tau_\gamma(x)=\vec{T}(x)$ for $\Hcal^1$-almost every $x\in E_\gamma$ 
and for \(\pi\)-almost every $\gamma\in \Gamma$;
\item
$\|T\|$ can be decomposed as
\[
  \|T\|=\int_\Gamma  \mu_\gamma\dd \pi (\gamma) \; ,
\]
where each $\mu_\gamma$ is the restriction of $\Hcal^1$
to the $1$-rectifiable set $E_\gamma$.
\end{enumerate}
\end{theorem}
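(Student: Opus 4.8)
\section{Proof of Theorem~\ref{s-decompcurr}}

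The plan is to follow Smirnov's strategy: realize $T$ as a weak\(^*\) limit of smooth fluxes, decompose each smooth flux along its field lines (where no cancellation occurs because the field lines are oriented along the field), and then pass to the limit in the space of curve fragments. Throughout I treat the finite-mass current $T=\vec T\|T\|$ as the $\R^d$-valued Radon measure $\vec T\|T\|$, so that mollification is meaningful. Fix a standard mollifier $\rho_\eps$ and set $v_\eps:=(\vec T\|T\|)\conv\rho_\eps\in C^\infty(\R^d;\R^d)$ with associated current $T_\eps:=v_\eps\,\Lcal^d$. Then $\diverg v_\eps=(\partial T)\conv\rho_\eps$, so $\mathbf{M}(\partial T_\eps)=\int|\diverg v_\eps|\le\mathbf{M}(\partial T)$ and $\mathbf{M}(T_\eps)=\int|v_\eps|\le\mathbf{M}(T)$. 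Since $v_\eps\,\Lcal^d\toweakstar\vec T\|T\|$, we have $T_\eps\toweakstar T$ as currents, and by lower semicontinuity of mass together with the upper bounds above, in fact $\mathbf{M}(T_\eps)\to\mathbf{M}(T)$ and $\mathbf{M}(\partial T_\eps)\to\mathbf{M}(\partial T)$. This convergence of the \emph{masses}, not merely of the currents, is what will ultimately rule out cancellation in the limit.

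Next I would decompose each smooth flux along its field lines. For the smooth field $v_\eps$ with integrable divergence this is the classical method-of-characteristics statement: there is a finite positive measure $\pi_\eps$ on the space of Lipschitz curve fragments, each $\gamma$ an (arc-length parametrized, hence $1$-Lipschitz) integral curve of $v_\eps/|v_\eps|$ on a compact interval, whose sources and sinks are governed by $(\diverg v_\eps)^\mp$, such that
\[
  T_\eps=\int \llbracket\gamma\rrbracket\dd\pi_\eps(\gamma),
\]
where $\llbracket\gamma\rrbracket$ denotes the integration current of $\gamma$. Since integral curves of an autonomous nonvanishing field are either injective or periodic, each $\gamma$ is simple and $\llbracket\gamma\rrbracket=\llbracket\im\gamma,\tau_\gamma,1\rrbracket$ is a multiplicity-one rectifiable current with $\tau_\gamma=v_\eps/|v_\eps|$ along $\im\gamma$. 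Because every $\gamma$ is oriented along $v_\eps$, there is no cancellation at the smooth level and $\mathbf{M}(T_\eps)=\int|v_\eps|=\int\Hcal^1(\im\gamma)\dd\pi_\eps=\int\mathbf{M}(\llbracket\gamma\rrbracket)\dd\pi_\eps$.

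I would then pass to the limit. Spatial confinement comes from the tightness of $\|T_\eps\|$, while the Markov-type estimate $\pi_\eps(\{\gamma:\Hcal^1(\im\gamma)>L\})\le\mathbf{M}(T_\eps)/L\le\mathbf{M}(T)/L$ gives tightness in the length variable uniformly in $\eps$; together with the uniform $1$-Lipschitz bound on the parametrizations, Prokhorov's theorem yields a subsequence with $\pi_\eps\toweakstar\pi$ in $\Mcal_+(\Gamma(\R^d))$. For each fixed smooth compactly supported $1$-form $\omega$ the map $\gamma\mapsto\langle\llbracket\gamma\rrbracket,\omega\rangle=\int_\gamma\omega$ is continuous and, on curve fragments of uniformly bounded length, bounded, so passing to the limit in the defining identity gives $T=\int\llbracket\gamma\rrbracket\dd\pi(\gamma)$, i.e.\ assertion (i) apart from the mass equality. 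Setting $T_\gamma:=\llbracket\gamma\rrbracket$, $E_\gamma:=\im\gamma$ and $\mu_\gamma:=\Hcal^1\restrict E_\gamma$, the measurability requirements are routine. I expect \emph{this} step to be the main obstacle: securing enough compactness of $\{\pi_\eps\}$ and continuity of the curve-to-current map to identify the weak\(^*\) limit with $T$ and to prevent the limit from charging degenerate curves. This is exactly where the normality hypothesis $\mathbf{M}(\partial T)<\infty$ is indispensable, since it bounds how long the field lines of $v_\eps$ persist and thereby underlies the length-tightness estimate above.

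Finally, the crux is the no-cancellation identity. From $T=\int T_\gamma\dd\pi$ and subadditivity of mass one always has $\mathbf{M}(T)\le\int\mathbf{M}(T_\gamma)\dd\pi$; conversely, lower semicontinuity of $\gamma\mapsto\mathbf{M}(\llbracket\gamma\rrbracket)=\Hcal^1(\im\gamma)$ under $\pi_\eps\toweakstar\pi$ together with the convergence $\mathbf{M}(T_\eps)\to\mathbf{M}(T)$ established above gives
\[
  \int\mathbf{M}(T_\gamma)\dd\pi\le\liminf_{\eps\to0}\int\mathbf{M}(\llbracket\gamma\rrbracket)\dd\pi_\eps=\liminf_{\eps\to0}\mathbf{M}(T_\eps)=\mathbf{M}(T).
\]
This yields the mass equality in (i). Taking total variations in the vector identity $\vec T\|T\|=\int\tau_\gamma\mu_\gamma\dd\pi$ gives the measure inequality $\|T\|\le\int\mu_\gamma\dd\pi$, which the just-established equality of total masses upgrades to the identity (iii); and equality in the triangle inequality for vector measures, using $|\tau_\gamma|=|\vec T|=1$, then forces $\tau_\gamma(x)=\vec T(x)$ for $\pi$-a.e.\ $\gamma$ and $\Hcal^1$-a.e.\ $x\in E_\gamma$, which is (ii).
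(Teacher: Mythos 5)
You should first be aware that the paper does not prove this theorem at all: it is quoted verbatim from Smirnov \cite{Smirnov93}, so your attempt has to be measured against Smirnov's argument. Your mollify--decompose--pass-to-the-limit outline is the natural first attack and is close in spirit to part of Smirnov's scheme, but the steps you yourself flag as delicate are genuine gaps, and one supporting claim is wrong. At the smooth level, the asserted ``classical'' decomposition into compact fragments whose endpoints are governed by $(\diverg v_\eps)^{\mp}$ cannot hold as stated: if $T$ is a cycle, then $\partial T=0$, hence $\diverg v_\eps=0$, the field lines of $v_\eps$ persist for all time (generically with infinite length), and there are no sources or sinks at which a compact fragment could terminate. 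One is forced to cut trajectories, and the choice of cuts --- Smirnov averages over the cutting parameter to retain measurability, the mass identity, and control on the family of fragments --- is a core technical idea absent from your sketch. For the same reason your closing remark that normality ``bounds how long the field lines of $v_\eps$ persist'' is false: your length-tightness estimate comes from the mass bound via Chebyshev, and $\mathbf{M}(\partial T)<\infty$ gives no length control whatsoever in the solenoidal case, which is precisely the hardest one.

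The limit step also does not go through as written. The total masses $\pi_\eps(\Gamma(\R^d))$ are not uniformly bounded, since cutting produces arbitrarily short fragments, so Prokhorov does not apply without a normalization; and even granting a vague limit $\pi$, it may charge constant curves, which carry the zero current, so that $T=\int T_\gamma \dd\pi(\gamma)$ can fail. More seriously, the integrand $\gamma\mapsto\langle\llbracket\gamma\rrbracket,\omega\rangle$ is unbounded --- it grows linearly in $\Hcal^1(\im\gamma)$ --- so passing to the limit in $T_\eps=\int\llbracket\gamma\rrbracket\dd\pi_\eps(\gamma)$ requires \emph{uniform integrability} of the length functional with respect to $(\pi_\eps)$, and the Chebyshev bound $\pi_\eps(\{\gamma:\Hcal^1(\im\gamma)>L\})\le \mathbf{M}(T)/L$ does not provide it: mass can concentrate on ever longer curves and leak out of the representation. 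If any mass leaks, the identity in (i) fails, and with it both the subadditivity inequality $\mathbf{M}(T)\le\int\mathbf{M}(T_\gamma)\dd\pi(\gamma)$ and your derivations of (ii) and (iii), which are correct but strictly conditional on (i) and (iii). Establishing (i) without loss or cancellation is exactly where Smirnov's proof invests its work (splitting into a cycle plus an acyclic part, decomposing the acyclic part, and averaged cutting of solenoids), and that work is missing here.
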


An Alberti representation of a Euclidean measure splits it into measures concentrated on ``fragments'' of curves. In general, these fragments cannot be glued together to obtain a \(1\)-dimensional normal current since  the  boundary may have infinite mass. Nevertheless, the ``holes'' of every curve appearing in an Alberti representation of a measure \(\nu\in\mathcal \Mcal_+(\R^d)\) can be ``filled'' in such a way as to produce a normal $1$-current \(T\) with \(\nu\ll \|T\|\). Moreover, if the representation has  directions in a cone $C$, then the constructed normal current $T$ has orienting vector \(\vec{T}\) in $C\setminus \{0\}$ almost everywhere (with respect to $\|T\|$). Indeed, we have the following lemma, which is essentially~\cite[Corollary~6.5]{AlbertiMarchese16}; it can be interpreted as a partial converse to Theorem~\ref{s-decompcurr}:

\begin{lemma} \label{lem:1curr_Arepr}
Let $\nu \in \Mcal_+(\R^d)$ be a finite Radon measure. If there is an Alberti representation $\nu = \int \nu_{\gamma} \dd \pi(\gamma)$ with directions in a cone $C$, then there exists a normal $1$-current $T \in \Nbf_1(\R^d)$ such that $\vec{T}(x) \in C\setminus \{0\}$ for $\|T\|$-almost every $x \in \R^d$ and $\nu \ll \|T\|$.
\end{lemma}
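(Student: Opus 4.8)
The plan is to realize $T$ as a superposition of one-dimensional currents carried by the individual curves of the representation, after each curve has been completed to a genuine (hole-free) Lipschitz curve in a way that keeps its tangent inside the cone $C$. Write $C=\set{v\in\R^d}{v\cdot w\ge(1-\theta)\|v\|}$ with axis $w$, $\|w\|=1$, and aperture $\theta\in(0,1)$. For $\pi$-a.e.\ curve $\gamma$ the direction hypothesis gives $\gamma'(t)\in C\setminus\{0\}$ for $\Hcal^1$-a.e.\ $t\in\dom\gamma$ (here $\phi=\Id$), equivalently $\|\gamma'(t)\|\le(1-\theta)^{-1}\,w\cdot\gamma'(t)$. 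After reparametrizing by arclength I may assume $\|\gamma'\|=1$, so that $\nu_\gamma\ll\Hcal^1\restrict\im\gamma$ and $w\cdot\gamma'\ge 1-\theta>0$ a.e.

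The key point is a cone-preserving filling of the holes of $\dom\gamma$. Using that $w\cdot\gamma$ is strictly increasing wherever $\gamma$ is differentiable, I would first subdivide $\dom\gamma$ into countably many Borel pieces on each of which $s\mapsto w\cdot\gamma(s)$ is increasing, and reparametrize each such piece by $\sigma:=w\cdot\gamma$. On a fixed piece this exhibits $\gamma$ as a graph $\sigma\mapsto\bar\gamma(\sigma)=\sigma\,w+h(\sigma)$ over an interval $I\subset\R$ of the $w$-axis, where $h$ takes values in $w^{\perp}$ and, by the cone bound, is $L_0$-Lipschitz with $L_0:=\sqrt{(1-\theta)^{-2}-1}$. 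Now fill the (at most countably many) holes in the domain of $h$ by linear interpolation: this does not increase the Lipschitz constant, so the completed $h$ is $L_0$-Lipschitz on all of $I$. The completed graph then satisfies, for a.e.\ $\sigma\in I$,
\[
  w\cdot\bar\gamma'(\sigma)=1,\qquad \|\bar\gamma'(\sigma)\|^2=1+\|h'(\sigma)\|^2\le 1+L_0^2=(1-\theta)^{-2},
\]
whence $w\cdot\bar\gamma'(\sigma)=1\ge(1-\theta)\|\bar\gamma'(\sigma)\|$, i.e.\ $\bar\gamma'(\sigma)\in C\setminus\{0\}$. This is the heart of the matter: the tangent of the filled curve still lies in $C$.

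Each completed graph defines a normal $1$-current $T_{\bar\gamma}:=\bar\gamma_{\#}\llbracket I\rrbracket$, with $\vec{T}_{\bar\gamma}(x)\in C\setminus\{0\}$ for $\|T_{\bar\gamma}\|$-a.e.\ $x$, with $\mathbf{M}(T_{\bar\gamma})\le(1-\theta)^{-1}\,\Lcal^1(I)<\infty$ and $\mathbf{M}(\partial T_{\bar\gamma})=2$ (the boundary is a difference of two endpoint Dirac masses). Since $\im\gamma\subset\im\bar\gamma$ with multiplicity at least one on $\im\gamma$, we have $\|T_{\bar\gamma}\|\ge\Hcal^1\restrict\im\gamma$ and hence $\nu_\gamma\ll\|T_{\bar\gamma}\|$. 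I would then superpose: indexing the countable family of pieces produced from all curves by a measure space $(\Gamma',\tilde\pi)$ built from $\pi$ together with summable weights $2^{-n}$ on the pieces of each $\gamma$ and a positive normalizing factor $g>0$ that makes $\int_{\Gamma'}\mathbf{M}(T_{\bar\gamma})\dd\tilde\pi<\infty$ while keeping $\tilde\pi$ finite, set $T:=\int_{\Gamma'} T_{\bar\gamma}\dd\tilde\pi(\gamma)$. Then $\mathbf{M}(T)<\infty$ and $\mathbf{M}(\partial T)\le\int\mathbf{M}(\partial T_{\bar\gamma})\dd\tilde\pi=2\,\tilde\pi(\Gamma')<\infty$, so $T\in\Nbf_1(\R^d)$. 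As $\|T\|=\int\|T_{\bar\gamma}\|\dd\tilde\pi$, the unit vector $\vec{T}(x)$ is a $\|T\|$-average of vectors $\vec{T}_{\bar\gamma}(x)\in C$, and convexity of $C$ forces $\vec{T}(x)\in C\setminus\{0\}$ $\|T\|$-a.e. Finally, strict positivity of the weights makes $\|T\|(A)=0$ imply $\nu_\gamma(A)=0$ for $\pi$-a.e.\ $\gamma$, giving $\nu\ll\|T\|$.

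The main obstacle is the part glossed over in the second paragraph: carrying out the subdivision into $w$-monotone pieces and the hole-filling \emph{measurably in $\gamma$}, so that $\gamma\mapsto T_{\bar\gamma}$ and $\gamma\mapsto\mathbf{M}(T_{\bar\gamma})$ are measurable and the superposition integral, together with $\partial\int T_{\bar\gamma}\dd\tilde\pi=\int\partial T_{\bar\gamma}\dd\tilde\pi$, is justified. This measurable-selection bookkeeping — rather than the geometric filling estimate, which is the short computation above — is the technical heart, and it is exactly the content of \cite[Corollary~6.5]{AlbertiMarchese16}; by Remark~\ref{rmk:bilip} one may moreover assume at the outset that the curves are bi-Lipschitz, which streamlines the reparametrization by $\sigma=w\cdot\gamma$.
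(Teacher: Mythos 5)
The step you present as a short computation --- and on which everything downstream depends --- is false as stated. The hypothesis that the representation has directions in $C$ constrains only the derivative of $\gamma$ \emph{relative to the compact set} $\dom\gamma$; it says nothing about the increments of $\gamma$ across the gaps of $\dom\gamma$, and monotonicity of $\sigma=w\cdot\gamma$ controls only the $w$-component of those increments, never the transverse one. So the claim that, on a $w$-monotone piece, ``by the cone bound, $h$ is $L_0$-Lipschitz'' does not hold. Concretely, take $K=[0,1]\cup[2,3]$, $\gamma(s)=sw$ on $[0,1]$ and $\gamma(s)=(s-1+\eps)w+u$ on $[2,3]$ with $u\in w^\perp\setminus\{0\}$, and $\nu_\gamma=\gamma_\#(\Lcal^1\restrict K)$. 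Then $\gamma$ is bi-Lipschitz, $\gamma'\equiv w\in C\setminus\{0\}$ a.e.\ on $K$, and $w\cdot\gamma$ is strictly increasing on all of $K$, so $K$ is a single piece for your subdivision; yet $h$ jumps by $\norm{u}$ across a $\sigma$-gap of length $\eps$, so its Lipschitz constant is $\norm{u}/\eps$, and your interpolated segment has direction $\eps w+u$, which for $\norm{u}$ large lies far outside $C$. Thus the filled current violates $\vec{T}\in C\setminus\{0\}$ on a set of positive mass, and with it every later step (including $\|T\|=\int\|T_{\bar\gamma}\|\dd\tilde\pi$, which silently uses the one-sidedness of the cone to exclude cancellation). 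Nor is this a reparable slip in the estimate: a curve defined on an interval with a.e.\ tangent in the closed convex cone $C$ has all its secants in $C$, while $\gamma(2)-\gamma(1)=\eps w+u\notin C\cup(-C)$, so \emph{no} curve with tangents in $C$ can contain both $\gamma(1)$ and $\gamma(2)$; and for Cantor-type fragments with $\gamma'\in\partial C$ a.e.\ and transverse jumps at all scales, no positive-measure sub-fragment admits a $C$-preserving filling at all.

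This is exactly the trap the paper's proof is built to avoid, which is why its structure is different from yours. The paper never fills whole fragments: Step~1 invokes the construction of \cite[Lemma~6.12]{AlbertiMarchese16} to extract from each $\gamma$ a single $1$-Lipschitz curve $\psi_{\nu_\gamma}$ with derivative in $C\setminus\{0\}$ whose image carries merely \emph{some positive} $\nu_\gamma$-mass, and the superposed current is accordingly only shown to be \emph{not mutually singular} with $\nu$. The upgrade from non-singularity to $\nu\ll\|T\|$ is the content of the whole of Step~2: a maximization--exhaustion over the class $\mathcal{T}_\nu$, in which a maximizing sequence is summed with weights $2^{-k}$, mass cancellation is excluded via \eqref{eq:coneC}, and Step~1 applied to a putative singular part of $\nu$ contradicts maximality. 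Your proposal has no counterpart of this exhaustion because it assumes each fragment can be captured in its entirety, which, as above, is impossible. A genuine one-shot variant does exist, but it requires first decomposing each fragment, by an Egorov--Lusin argument on the differentiability of $\gamma$, into countably many compact pieces whose \emph{secants} (not just derivatives) lie in a slightly enlarged cone $C'\supset C$, and filling those; this proves the lemma with $C$ replaced by $C'$ --- which would still suffice for Lemma~\ref{lem:abscont}, since linear independence of cones survives small enlargements --- but it is not the statement as given, and it is a different argument from the one you wrote. The measurable-selection bookkeeping you single out as the technical heart is indeed needed, but it is the lesser of the two difficulties.
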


\begin{proof}
For the purpose of illustration we sketch the proof.

\proofstep{Step~1.}
Given \(\nu\) as in the statement, we claim that there exists a normal   $1$-current $T=\vec{T} \|T\|$ with \(\mathbf M(T)\le1 \) and \(\mathbf M(\partial T)\le 2\) such that $\vec{T}(x)\in C$, for $\|T\|$-almost every $x$ and  that $\nu$ is not singular with respect to $\|T\|$. 

The claim follows from the proof of~\cite[Lemma 6.12]{AlbertiMarchese16}. For the sake of completeness let us present the main line of reasoning. By arguing as in Step~1 of the proof of~\cite[Lemma 6.12]{AlbertiMarchese16},  to  every $\gamma\in\Gamma(\R^d)$ with $\gamma'(t) \in C$ and a Borel measure $\nu_\gamma\ll \Hcal^1 \restrict \im\gamma$, we can associate a \(1\)-Lipschitz map $\psi_{\nu_\gamma} \colon [0,1]\to\R^d$ satisfying 
\begin{equation*}\label{eq:propT}
\nu_\gamma(\im(\psi_{\nu_\gamma}))>0
\qquad\text{and}\qquad
\psi_{\nu_\gamma}'(t)\in C\setminus \{0\} \quad\text{for $\Hcal^1$-a.e. $t\in [0,1]$.}
\end{equation*}
This map can moreover be chosen such that \(\gamma\mapsto \psi_{\nu_\gamma}\) coincides with a Borel measurable map \(\pi\)-almost everywhere once we endow the set of curves with the topology of uniform convergence, see Step~3 in the proof of~\cite[Lemma 6.12]{AlbertiMarchese16}.

Let $T_{{\nu_\gamma}}:=\llbracket \im\psi_{\nu_\gamma},\tau_{\psi_{\nu_\gamma}},1 \rrbracket$ be the rectifiable $1$-current associated to $\psi_{\nu_\gamma}$ and  set
\[
  T := \int T_{\nu_{\gamma}} \dd \pi(\gamma) \; .
\]
Since  \(\psi_{\nu_\gamma}\) is \(1\)-Lipschitz, \(\Hcal ^1(\im\psi_{\nu_\gamma})\le 1\) and thus   $\mathbf M (T)\le 1$. Moreover, for all smooth  compactly supported functions  \(f \colon \R^d \to \R \) we have
 \[
 \langle \partial T,f \rangle= \langle T,d f\rangle =\int f(\psi_{\nu_\gamma}(1))-f(\psi_{\nu_\gamma}(0))  \dd \pi(\gamma) \; ,
 \] 
so that \(\mathbf M (\partial T)\le 2\).

By assumption, \(\vec {T}(x)\in C\setminus \{0\}\) for \(\|T\|\)-almost every \(x \in \R^d\). To show that  \(\|T\|\) and \(\nu\) are not mutually singular, for \(\pi\)-almost every \(\gamma\) set
\[
  \nu'_{\gamma}:=\nu_\gamma\restrict \im\, \psi_{\nu_\gamma}  \qquad\text{and}\qquad
  \nu':=\int \nu_{\gamma}' \dd \pi (\gamma) \; ,
\]
so that \(\nu'\ne 0\) and \(\nu'\le \nu\). We will now establish that \(\nu'\ll \|T\|\), for  which we will prove that \(\nu\) and \(\|T\|\) are not mutually singular. Let \(E \subset \R^d\) be such that \(\|T\|(E)=0\). Using
\[
  T=\int \llbracket \im\psi_{\nu_\gamma},\tau_{\psi_{\nu_\gamma}},1 \rrbracket \dd \pi (\gamma)  \qquad\text{with}\qquad
  \tau_{\psi_{\nu_\gamma}}=\frac{\psi_{\nu_\gamma}'}{|\psi_{\nu_\gamma}'|} \in C \; ,
\]
we get 
\[
\Hcal^1(\im\psi_{\nu_\gamma}\cap E)=0\qquad \text{for \(\pi\)-a.e. \(\gamma\).}
\] 
Since by definition  \(\nu_\gamma\ll \Hcal^1\restrict \im \gamma\), we have that \( \nu'_\gamma\ll \Hcal^1\restrict \im \psi_{\nu_\gamma}\). Thus, \(\nu'(E)=0\).

\medskip
\noindent
\proofstep{Step~2.}
Let us define   
\[
\mathcal T :=\setb{ T\in \mathbf N_1(\R^d) }{ \text{$\mathbf M (T)\le 1$, $\mathbf M(\partial T)\le 2$ and $\vec T\in C$ \(\|T\|\)-a.e.} }
\]
and
\[
\mathcal T_\nu :=\setb{T\in \mathcal T}{ \text{\(\nu\) and \(T\) are not singular}} \; .
\]
Note that if \(C=\setn{v\in\R^d}{v\cdot w\geq(1-\theta)\|v\|}\) for some \(w\in \mathbb{S}^{d-1}\), \(\theta \in (0,1)\), then \(\vec T\in C\) almost everywhere implies that 
\begin{equation}\label{eq:coneC}
\|T\| \ge T\cdot w\ge (1-\theta) \| T\|
\end{equation}
as measures (here we are identifying \(T\) with an \(\R^d\)-valued Radon measure and use the pointwise scalar product). Moreover, as a consequence of the  Radon--Nikod\'{y}m theorem,  for every \(T\in \mathcal T_\nu \) we may write 
\[
\nu =g_{\|T\|} \|T\|+\nu_{\|T\|}^s \qquad\text{with}\qquad
\nu_{\|T\|}^s\perp \|T\| \;,\; 
\int g_{\|T\|} \dd \|T\|>0 \; .
\]
Let us set  \(M:=\sup_{T\in \mathcal T_\nu } \int g_{\|T\|} \dd \|T\|>0\)
and let \(T_k\in \mathcal T_\nu \) be a sequence with
\[
\int g_{\|T_k\|} \dd \|T_k\|\to M.
\]
Define
\[
  T:=\sum_{k} 2^{-k} T_k
\]
and note that \(T\in \mathcal T\). Moreover, by ~\eqref{eq:coneC}, \( \|T_k\|\ll  \|T\|\) for all \(k\in \N\), so that there exist \(h_k \colon \R^d \to \R \) with
\[
\int_E h_k \dd \|T\|=\int_E g_{\|T_k\|} \dd \|T_k\| \le \nu (E) \qquad\textrm{for all Borel sets \(E \subset \R^d\).}
\]
In particular, \(T\in \mathcal T_\nu\) and \(h_k\le g_{\|T\|}\). Set  \(m_k=\max_{1\le j\le k} h_j\). By the monotone convergence theorem, \(m_k\to m_\infty\le g_{\|T\|} \) in \(\Lrm^1(\R^d,\|T\|)\) and 
\[
M\le \lim_{k\to\infty} \int m_k \dd \|T\| = \int m_\infty  \dd \|T\|\le \int g_{\|T\|}\dd \|T\|\le M.
\] 
Hence, \(M\) is actually a maximum and it is attained by \(T\).

We now claim that \(\nu \ll \|T\|\). Indeed, assume by contradiction that \(\nu=g_{\|T\|}\dd \|T\|+\nu^s_{\|T\|}\) with \(\nu_{\|T\|}^s\ne 0\). Since the Alberti representation of \(\nu\) induces an Alberti representation of \(\nu_{\|T\|}^s\),  we can apply Step~1 to find a normal $1$-current
\[
  S\in \mathcal T_{\nu_{\|T\|}^s}\subset \mathcal T_\nu
\]
such that \(\nu_{\|T\|}^s\) and \(\|S\|\) are not mutually singular. In particular, if \(\nu =g_{\|S\|} \dd \|S\|+\nu_{\|S\|}^s\), then there exists a Borel set \(F \subset \R^d\) such that 
\begin{equation}\label{eq:cont}
  \|T\|(F)=0
  \qquad\textrm{and}\qquad
  \int_F g_{\|S\|}\dd \|S\|>0.
\end{equation}
 Let us define  \(W:=(T+S)/2\) and note that by~\eqref{eq:coneC} it holds that \(\|T\|,\|S\|\ll \|W\|\) so that \(W\in \mathcal T_\nu\). Moreover,  there are  functions \(h_{T}\,, h_S\le g_{\|W\|} \) such that 
 \[
 \int_E h_{T} \dd \|W\|=  \int_E g_{\|T\|} \dd \|T\| \;,\qquad  \int_E h_{S} \dd \|W\|=  \int_E g_{\|S\|} \dd \|S\|
 \]
 for all Borel sets \(E\). However, for \(F\) as in~\eqref{eq:cont} we obtain 
 \[
 M\ge \int_{\R^d} g_{\|W\|} \dd \|W\|\ge \int_{\R^d} g_{\|T\|} \dd \|T\|+ \int_{F } g_{\|S\|} \dd \|S\|>M,
 \]
 a contradiction.
 \end{proof}

\section{Proof of Cheeger's conjecture}

The key tool to prove Cheeger's conjecture is the  following result from~\cite[Corollary~1.12]{DePhilippisRindler16}:

\begin{theorem} \label{thm:1curr}
Let $T_1=\vec{T}_1 \|T_1\|,\ldots,T_d=\vec{T}_d \|T_d\|\in \mathbf{N}_1(\R^d)$ be $1$-dimensional normal currents. Let $\nu\in \Mcal_+(\R^d)$ be a positive Radon measure such that
\begin{enumerate}
\item[(i)]  $\nu \ll \|T_i\|$ for $i=1,\ldots,d$, and
\item[(ii)] $\spn\{\vec {T}_1(x),\ldots,\vec{T}_d(x)\}=\R^d$ for $\nu$-almost every $x$.
\end{enumerate}
Then, $\nu \ll \Lcal^{d}$.
\end{theorem}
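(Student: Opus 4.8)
The deep input I would use as a black box is the structure theorem for PDE-constrained measures of De Philippis and Rindler (\cite[Theorem~1.1]{DePhilippisRindler16}): if $\mu$ is an $\R^N$-valued Radon measure on $\R^d$, and $\Acal$ is a homogeneous first-order constant-coefficient differential operator such that $\Acal\mu$ is again a finite measure, then the polar $\frac{\di\mu}{\di|\mu|}(x)$ lies in the wave cone $\Lambda_\Acal:=\bigcup_{\xi\ne 0}\ker\Abb(\xi)$ at $|\mu|^s$-almost every $x$, where $\Abb(\xi)$ is the principal symbol and $|\mu|^s$ denotes the part of $|\mu|$ that is singular with respect to $\Lcal^d$. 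The plan is to encode the hypotheses as a single such constraint and then read off the conclusion from the geometry of $\Lambda_\Acal$.

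First I would identify each normal current $T_i=\vec T_i\|T_i\|$ with the $\R^d$-valued measure $\mu_i:=\vec T_i\|T_i\|$; since $\partial T_i=-\diverg\mu_i$ has finite mass, $\diverg\mu_i$ is a finite measure. I then assemble the matrix-valued measure $\mu:=(\mu_1,\dots,\mu_d)\in\Mcal(\R^d;\R^{d\times d})$, whose $i$-th row is $\mu_i$, and take $\Acal$ to be the row-wise divergence, $\Acal\mu=(\diverg\mu_1,\dots,\diverg\mu_d)$. By construction $\Acal\mu$ is a finite measure, so the structure theorem applies. Its principal symbol acts by $\Abb(\xi)M=M\xi$ for $M\in\R^{d\times d}$, whence $\Lambda_\Acal=\{M\in\R^{d\times d}:\exists\,\xi\ne 0,\ M\xi=0\}=\{M:\rank M<d\}$ is exactly the set of rank-deficient matrices. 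The theorem therefore yields that $\frac{\di\mu}{\di|\mu|}(x)$ is rank-deficient for $|\mu|^s$-a.e.\ $x$.

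It remains to confront this with the spanning hypothesis. Writing $\nu=\nu^a+\nu^s$ for the Lebesgue decomposition, I aim to show $\nu^s=0$. Since $|\mu_i|=\|T_i\|\le|\mu|$, hypothesis~(i) gives $\nu\ll|\mu|$; a short measure-theoretic argument on a common $\Lcal^d$-null carrier of the singular parts then upgrades this to $\nu^s\ll|\mu|^s$, so the rank-deficiency of the polar holds $\nu^s$-a.e. On the other hand, the $i$-th row of the polar is $\frac{\di\mu_i}{\di|\mu|}=g_i\,\vec T_i$ with $g_i:=\frac{\di\|T_i\|}{\di|\mu|}\ge 0$; and because $\|T_i\|(\{g_i=0\})=\int_{\{g_i=0\}}g_i\dd|\mu|=0$ together with $\nu^s\ll\|T_i\|$ forces $\{g_i=0\}$ to be $\nu^s$-null, all $d$ scalar factors are strictly positive $\nu^s$-a.e. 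Scaling rows by positive numbers preserves rank, so by hypothesis~(ii) the rows of the polar span $\R^d$ at $\nu^s$-a.e.\ $x$, i.e.\ the polar has full rank there. If $\nu^s\ne 0$ this contradicts the rank-deficiency on a set of positive $\nu^s$-measure; hence $\nu^s=0$ and $\nu\ll\Lcal^d$.

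The genuinely hard part is the structure theorem itself, which I would invoke as a black box; the work specific to this statement is the bookkeeping in the last paragraph, whose only subtleties are verifying $\nu^s\ll|\mu|^s$ and the positivity of the densities $g_i$, which together allow the spanning hypothesis to be transferred to the rows of the polar matrix.
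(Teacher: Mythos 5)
Your proof is correct, but note that the paper itself offers no proof of this theorem: it is imported verbatim as Corollary~1.12 of~\cite{DePhilippisRindler16}, so the comparison is really with the derivation given in that reference. Your argument reconstructs that derivation essentially exactly --- identify each $T_i$ with the divergence-measure field $\mu_i=\vec T_i\|T_i\|$, stack them into a matrix-valued measure constrained by the row-wise divergence operator, observe that the associated wave cone $\bigl\{M : M\xi=0 \text{ for some } \xi\neq 0\bigr\}$ is the set of rank-deficient matrices, and transfer the spanning hypothesis to the polar of $\mu$ via $\nu^s\ll|\mu|^s$ and the $\nu^s$-a.e.\ positivity of the densities $g_i=\frac{\di\|T_i\|}{\di|\mu|}$ --- and the two bookkeeping steps you flag as the only subtleties both check out (for the first: if $Z$ is an $\Lcal^d$-null carrier of $\nu^s$ and $|\mu|^s(E)=0$, then $|\mu|(E\cap Z)=0$ since $|\mu|^a(Z)=0$, so $\nu^s(E)=\nu^s(E\cap Z)=0$ by $\nu\ll|\mu|$).
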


Combining the above result with Lemma~\ref{lem:1curr_Arepr} we immediately get the following:

\begin{lemma} \label{lem:abscont}
Let $\nu \in \Mcal_+(\R^d)$ have $d$ independent Alberti representations. Then, $\nu \ll \Lcal^d$.
\end{lemma}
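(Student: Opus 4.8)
The plan is to combine Lemma~\ref{lem:1curr_Arepr}, which converts each Alberti representation into a normal $1$-current, with Theorem~\ref{thm:1curr}, which turns a pointwise spanning condition on the orienting vectors into absolute continuity. The word ``immediately'' is apt: once the hypotheses of Theorem~\ref{thm:1curr} are set up, there is no further analytic work to do.

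First I would unpack the hypothesis. By the definitions in Section~\ref{sc:s1} (with $\varphi = \Id$, as stipulated for $X = \R^d$), the assumption that $\nu$ has $d$ independent Alberti representations means that there are linearly independent cones $C_1,\ldots,C_d \subset \R^d$ together with Alberti representations $\nu = \int \nu_\gamma^{(i)} \dd \pi_i(\gamma)$ of $\nu$, the $i$'th of which has directions in $C_i$, for $i = 1,\ldots,d$. Since $\nu$ is Radon and hence finite on bounded sets, while absolute continuity with respect to $\Lcal^d$ is a local property, I would first reduce to the case of finite $\nu$: restricting to a ball, the family $(\nu_\gamma^{(i)} \restrict B_R)_\gamma$ is again an Alberti representation of the finite measure $\nu \restrict B_R$ with the same directions in $C_i$ (restriction affects neither $\gamma$ nor the absolute continuity $\nu_\gamma^{(i)} \ll \Hcal^1 \restrict \im\gamma$), so proving $\nu \restrict B_R \ll \Lcal^d$ for every $R$ yields $\nu \ll \Lcal^d$.

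Next, for each $i = 1,\ldots,d$ I would apply Lemma~\ref{lem:1curr_Arepr} to the $i$'th representation of the (now finite) measure $\nu$, obtaining a normal $1$-current $T_i \in \Nbf_1(\R^d)$ with $\vec{T}_i(x) \in C_i \setminus \{0\}$ for $\|T_i\|$-almost every $x$ and $\nu \ll \|T_i\|$. This directly supplies hypothesis~(i) of Theorem~\ref{thm:1curr}. The only point needing care is hypothesis~(ii), the spanning condition. Here I would use $\nu \ll \|T_i\|$ to transfer the direction information from $\|T_i\|$ to $\nu$: the set $\{x : \vec{T}_i(x) \notin C_i \setminus \{0\}\}$ is $\|T_i\|$-null, hence $\nu$-null, so $\vec{T}_i(x) \in C_i \setminus \{0\}$ for $\nu$-almost every $x$. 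Intersecting these $d$ full-measure sets, for $\nu$-almost every $x$ we have $\vec{T}_i(x) \in C_i \setminus \{0\}$ simultaneously for all $i$; by the linear independence of the cones, any such collection $\vec{T}_1(x),\ldots,\vec{T}_d(x)$ is linearly independent, hence these $d$ vectors span $\R^d$. Thus~(ii) holds, and Theorem~\ref{thm:1curr} gives $\nu \ll \Lcal^d$.

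The main obstacle is not really an obstacle, since the substantive work has already been carried out in Lemma~\ref{lem:1curr_Arepr} and Theorem~\ref{thm:1curr}. The only genuinely substantive bookkeeping is the localization to a finite measure and the passage from ``$\|T_i\|$-almost everywhere'' to ``$\nu$-almost everywhere'' for the cone membership; after that, the linear independence of the cones forces the spanning condition automatically.
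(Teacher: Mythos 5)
Your proposal is correct and follows essentially the same route as the paper: apply Lemma~\ref{lem:1curr_Arepr} to each of the $d$ representations, transfer the cone membership from $\|T_i\|$-a.e.\ to $\nu$-a.e.\ via $\nu \ll \|T_i\|$, invoke linear independence of the cones to get the spanning condition, and conclude with Theorem~\ref{thm:1curr}. Your preliminary localization to $\nu \restrict B_R$ is a sensible extra precaution (since Lemma~\ref{lem:1curr_Arepr} is stated for finite measures) that the paper's proof silently omits, but it does not change the substance of the argument.
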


\begin{proof}
Denote by $C_1,\ldots,C_d$ independent cones such that there are $d$ Alberti representations having directions in these cones. By Lemma~\ref{lem:1curr_Arepr} there are $d$ normal $1$-dimensional currents $T_1=\vec{T}_1 \|T_1\|,\ldots, T_d = \vec{T}_d \|T_d\|\in \mathbf{N}_1(\R^d)$ such that
\[
  \nu \ll \|T_i\|  \qquad\text{for $i=1,\ldots,d$,}
\]
and $\vec{T}_i(x) \in C_i$ for $\nu$-almost every $x \in \R^d$. By the independence of the cones,
\[
 \spn\bigl\{\vec{T}_1(x),\ldots,\vec{T}_d(x)\bigr\}=\R^d\qquad\textrm{for $\nu$-a.e.\ $x \in \R^d$.}
\]
This implies $\nu\ll \Lcal^d$ via Theorem~\ref{thm:1curr}. 
\end{proof}

In order to use the above result to prove Theorem~\ref{thm:main} one further needs the following \enquote{push-forward lemma}.

\begin{lemma} \label{lem:Arepr_push-forward}
Let $(X,\rho,\mu)$ be a Lipschitz differentiability space with a $d$-chart $(U,\phi)$. If $\mu\restrict U$ has $d$ $\phi$-independent Alberti representations, then also the push-forward $\phi_\# (\mu\restrict U) \in \Mcal_+(\R^d)$ has $d$ independent Alberti representations.
\end{lemma}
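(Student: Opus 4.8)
The plan is to transport each of the given Alberti representations of $\mu\restrict U$ to $\R^d$ by post-composing its curves with $\phi$ and pushing its fibre measures forward, and then to check that the resulting family is again an Alberti representation carrying the \emph{same} direction cone. Since the cones are then unchanged, their linear independence is automatic and the conclusion follows immediately. So I fix one of the $d$ representations, say $\mu\restrict U = \int \mu_\gamma \dd\pi(\gamma)$ with $\phi$-directions in a cone $C$. For $\gamma \in \Gamma(X)$ I set $\tilde\gamma := \phi\circ\gamma$; this is Lipschitz with $\dom\tilde\gamma = \dom\gamma$ nonempty and compact, so $\tilde\gamma \in \Gamma(\R^d)$. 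The candidate decomposition assigns to the curve $\tilde\gamma$ the measure $\phi_\#\mu_\gamma$, and the basic identity
\[
  \phi_\#(\mu\restrict U)(B) = (\mu\restrict U)(\phi^{-1}(B)) = \int \mu_\gamma(\phi^{-1}(B))\dd\pi(\gamma) = \int \phi_\#\mu_\gamma(B) \dd\pi(\gamma)
\]
holds for every Borel $B\subset\R^d$, supplying the required decomposition.

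The heart of the matter, and the step I expect to be the main obstacle, is to show that $\phi_\#\mu_\gamma \ll \Hcal^1\restrict\im\tilde\gamma$ for $\pi$-a.e.\ $\gamma$: a priori, pushing a measure forward under the Lipschitz map $\phi$ could concentrate mass, and what rules this out is precisely the direction hypothesis, which forces $\tilde\gamma$ to have nonvanishing derivative. Concretely, let $N\subset\R^d$ be Borel with $\Hcal^1(N\cap\im\tilde\gamma)=0$; I must show $\phi_\#\mu_\gamma(N)=\mu_\gamma(\phi^{-1}(N))=0$. Writing $S := \phi^{-1}(N)\cap\im\gamma$, one checks $S = \gamma\bigl(\tilde\gamma^{-1}(N)\bigr)$ (preimages taken inside $\dom\gamma$), whence $\Hcal^1(S) \le \Lip(\gamma)\,\Lcal^1\bigl(\tilde\gamma^{-1}(N)\bigr)$, so it suffices to prove $\Lcal^1(\tilde\gamma^{-1}(N))=0$. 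The area formula for the Lipschitz curve $\tilde\gamma\colon\dom\gamma\to\R^d$ gives
\[
  \int_{\tilde\gamma^{-1}(N)} \abs{\tilde\gamma'(t)} \dd t = \int_{N\cap\im\tilde\gamma} \# \bigl(\tilde\gamma^{-1}(y)\cap\dom\gamma\bigr)\dd\Hcal^1(y) = 0.
\]
Since the representation has $\phi$-directions in $C$, we have $\tilde\gamma'(t)=(\phi\circ\gamma)'(t)\in C\setminus\{0\}$, hence $\abs{\tilde\gamma'(t)}>0$, for $\Lcal^1$-a.e.\ $t$ and $\pi$-a.e.\ $\gamma$; therefore $\Lcal^1(\tilde\gamma^{-1}(N))=0$. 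This yields $\Hcal^1(S)=0$ and thus, using $\mu_\gamma\ll\Hcal^1\restrict\im\gamma$, $\phi_\#\mu_\gamma(N)=0$, as needed.

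It then remains to present the family in the exact form of Definition~\ref{def:Albertirepresentation}, that is, indexed by $\Gamma(\R^d)$. The map $\Phi\colon\Gamma(X)\to\Gamma(\R^d)$, $\Phi(\gamma)=\phi\circ\gamma$, is Lipschitz for the Hausdorff-on-graphs metric, hence Borel; I set $\tilde\pi:=\Phi_\#\pi$, disintegrate $\pi=\int\pi_{\tilde\gamma}\dd\tilde\pi(\tilde\gamma)$ with $\pi_{\tilde\gamma}$ concentrated on $\Phi^{-1}(\tilde\gamma)$, and put $\tilde\mu_{\tilde\gamma}:=\int\phi_\#\mu_\gamma\dd\pi_{\tilde\gamma}(\gamma)$. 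On $\Phi^{-1}(\tilde\gamma)$ one has $\phi\circ\gamma=\tilde\gamma$, so each $\tilde\mu_{\tilde\gamma}\ll\Hcal^1\restrict\im\tilde\gamma$ by the previous step, measurability of $\tilde\gamma\mapsto\tilde\mu_{\tilde\gamma}(B)$ comes from that of the disintegration, and Fubini together with the basic identity recovers $\phi_\#(\mu\restrict U)=\int\tilde\mu_{\tilde\gamma}\dd\tilde\pi(\tilde\gamma)$. Finally, for $\tilde\pi$-a.e.\ $\tilde\gamma$ the $\Id$-direction $\tilde\gamma'$ lies in $C\setminus\{0\}$ a.e., so this is an Alberti representation of $\phi_\#(\mu\restrict U)$ with directions in $C$. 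Running the construction on all $d$ input representations produces $d$ Alberti representations of $\phi_\#(\mu\restrict U)$ with directions in the original linearly independent cones $C_1,\dots,C_d$; as these cones are unchanged they remain linearly independent, so we obtain $d$ independent Alberti representations, as required.
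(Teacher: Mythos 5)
Your proposal is correct and follows essentially the same route as the paper's proof: both reduce to a single representation with cone $C$, push curves forward via $\Phi(\gamma)=\phi\circ\gamma$, use the disintegration theorem applied to $\pi$ and $\Phi_\#\pi$ to index the new fibre measures by $\Gamma(\R^d)$, and establish $\Hcal^1$-absolute continuity via the area formula, using that the cone condition forces $(\phi\circ\gamma)'\neq 0$ almost everywhere. The only (immaterial) difference is ordering: you verify absolute continuity curve-by-curve before disintegrating, while the paper disintegrates first and then checks it for the aggregated measures $\nu_{\bar\gamma}$.
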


\begin{proof}
It is enough to show that if there exists a representation of the form   $\mu\restrict U = \int \mu_\gamma \dd \pi(\gamma)$ with $\phi$-directions in a cone $C$ (i.e.\ such that \((\varphi\circ \gamma)'(t)\in C\setminus \{0\}\) for almost all \(t \in \dom \gamma\) and for \(\pi\)-almost every \(\gamma\)), then we can build an Alberti representation
\[
\phi_\#(\mu\restrict U)=\int \nu_{\bar \gamma}\dd\bar \pi(\bar \gamma)\qquad
\text{with}\qquad
 \bar\pi\in\mathcal P(\Gamma(\R^d)).
\]
with \(\bar \gamma'(t)\in C\setminus\{0\}\) for \(\bar \pi\)-almost every \(\bar \gamma\) and almost every \(t\in \dom \bar \gamma\). To this end consider the map $\Phi \colon \Gamma(X)\to \Gamma (\R^d)$ given by $\Phi(\gamma) := \phi \circ \gamma$ and let \(\bar \pi:=\Phi_\# \pi\in \Mcal_+(\Gamma(\R^d))\). Note that, by the very definition of the push-forward  measure, for \(\bar\pi\)-almost every \(\bar \gamma\),  it holds that \(\bar \gamma=\varphi\circ \gamma\) for some \(\gamma\in \Gamma (X)\).  

By considering \(\pi\) as a probability measure defined on the Polish space \(\mathcal K\) defined in~\eqref{eq:K}, and noting that $\pi$ is concentrated on \(\Gamma(X)\), we can  apply the disintegration theorem for measures~\cite[Theorem~5.3.1]{AmbrosioGigliSavare05book} to show that for \(\bar\pi\)-almost every \(\bar \gamma\), there exists a Borel probability measure   \(\eta_{\bar \gamma}\)  concentrated on 
\(\Phi^{-1}(\bar\gamma)\) and such that 
\[
\pi(A) =\int  \eta_{\bar \gamma}(A) \dd\bar \pi(\bar \gamma)\qquad\textrm{for all Borel sets \(A\subset\Gamma(X)\).}
\]
Note also that, by the  disintegration theorem, the map \(\bar \gamma \mapsto \eta_{\bar \gamma}\) is Borel measurable. Let us now set 
\[
\nu_{\bar \gamma} := \int_{ \Phi^{-1}(\bar\gamma)} \phi_\#(\mu_\gamma) \dd \eta_{\bar \gamma} (\gamma).
\]
Clearly, we have the representation
\[
\phi_\#(\mu\restrict U)=\int \nu_{\bar \gamma}\dd\bar \pi(\bar \gamma)
\]
and \(\bar \gamma'(t)=(\varphi\circ \gamma )'(t)\in C\setminus\{0\}\) for \(\bar \pi\)-almost every \(\bar \gamma\) and almost every \(t\in \dom \bar \gamma\). Hence, to conclude we only have to show that 
\[
\nu_{\bar \gamma}\ll \Hcal^1\restrict \im \,\bar \gamma\qquad \textrm{for \(\bar \pi\)-a.e. \(\bar\gamma\).}
\]
Let \(E\) be a set with \(\Hcal^1(E\cap  \im\,\bar \gamma)=0\). Since \(\bar \gamma'(t)\ne 0\) for almost every \(t\in \dom \gamma\), the area formula implies that \( \Lcal^1(\bar \gamma^{-1}(E))=0\). If \(\gamma\in \Phi^{-1}(\bar \gamma)\), say \(\bar \gamma=\varphi\circ \gamma\), then 
\[
\Hcal^1(\varphi^{-1}(E)\cap \im\,\gamma)\le \Hcal^1(\gamma(\bar \gamma^{-1}(E)))=0\qquad \textrm{for all \(\gamma\in \Phi^{-1}(\bar \gamma)\).}
\]
Hence, \(\mu_\gamma (\varphi^{-1}(E))=0\) for all \(\gamma\in \Phi^{-1}(\bar \gamma)\), which immediately gives
\[
\nu_{\bar \gamma}(E)=\int_{\Phi^{-1}(\bar \gamma)} \mu_\gamma (\varphi^{-1}(E)) \dd \eta_{\bar \gamma} (\gamma)=0 \; .
\]
This concludes the proof.
\end{proof}

\begin{proof}[Proof of Theorem~\ref{thm:main}]
Let $(U,\phi)$ be a $d$-chart. By Theorem~\ref{thm:Bate} there are $d$ $\phi$-independent Alberti representations of $\mu \restrict U_k$, where $U = \bigcup_{k \in \N} U_k$ is the decomposition from Bate's theorem. Then, via Lemma~\ref{lem:Arepr_push-forward}, the push-forward $\phi_\# (\mu \restrict U_k)$ also has $d$ independent Alberti representations. Finally, Lemma~\ref{lem:abscont} yields $\phi_\# (\mu \restrict U_k) \ll \Lcal^d$ and this concludes the proof.
\end{proof}

% GENERATED BIBLIOGRAPHY

%\bibliography{../Bib/Cheeger}
%\bibliographystyle{alpha}

\end{document}